\documentclass[reqno,12pt]{amsart}
\usepackage{amssymb,amsmath}
\usepackage{tikz,enumerate,hyperref}
\usepackage{ifthen}

\usetikzlibrary{patterns}
\usetikzlibrary{decorations.pathreplacing}
\definecolor{tileColor}{HTML}{FF3300}

\evensidemargin18pt
\oddsidemargin18pt
\topmargin0pt
\textwidth6.3in

\newtheorem{theorem}{Theorem}
\newtheorem{lemma}[theorem]{Lemma}

\theoremstyle{definition}
\newtheorem{definition}[theorem]{Definition}


\newcommand{\oeis}[1]{\href{https://oeis.org/#1}{#1}}


\title{Generalized metallic means}
\author{Juan B. Gil}
\author{Aaron Worley}
\address{Penn State Altoona\\ 3000 Ivyside Park\\ Altoona, PA 16601}

\begin{document}
\maketitle

\begin{abstract}
The metallic means (also known as metallic ratios) may be defined as the limiting ratio of consecutive terms of sequences connected to the Fibonacci sequence via the {\sc invert} transform. For example, the Pell sequence ({\sc invert} transform of the Fibonacci sequence) gives the so-called silver mean, and the {\sc invert} transform of the Pell sequence leads to the bronze mean. The limiting ratio of the Fibonacci sequence itself is known as the golden mean or ratio. We introduce a new family of $k$th-degree metallic means obtained through {\sc invert} transforms of the generalized $k$th-order Fibonacci sequence. As it is the case for $k=2$, each generalized metallic mean is shown to be the unique positive root of a $k$th-degree polynomial determined by the sequence. 
\end{abstract}

\thispagestyle{empty}
\section{Introduction}

We start by reviewing some basic information about the sequence of Fibonacci numbers, defined through the recurrence relation
\begin{gather*}
 F_0=0,\; F_1=1,\\
 F_n = F_{n-1}+F_{n-2} \;\text{ for } n\ge 2.
\end{gather*}
Their generating function $F(x)=\sum_{n=1}^{\infty} F_nx^n$ may be written as $F(x)=\dfrac{x}{1-x-x^2}$, and it can be easily shown that
\[ \lim_{n\to\infty}{\frac{F_{n+1}}{F_{n}}} = \varphi, \]
where $\varphi$ is the unique positive root of $x^2 -x -1 =0$. The number $\varphi = \frac{1+\sqrt{5}}{2} \approx 1.618$ is referred to as the Golden Mean or Golden Ratio.

More generally, for $m\in\mathbb{N}$, one can consider the sequence $\big(F^{(m)}_n\big)_{n\in\mathbb{N}_0}$ defined by 
\begin{gather*}
 F^{(m)}_0=0,\; F^{(m)}_1=1,\\
 F^{(m)}_n = mF^{(m)}_{n-1}+F^{(m)}_{n-2} \;\text{ for } n\ge 2,
\end{gather*}
with generating function 
\[ F_m(x) = \frac{x}{1-mx-x^2}. \] 
Observe that $\big(F^{(2)}_n\big)$ is the sequence of Pell numbers (see \cite[\oeis{A000129}]{Sloane}), and for $m=3$ and $m=4$, we get sequences \oeis{A006190} and \oeis{A001076} in \cite{Sloane}.

It can be shown that 
\[ \lim_{n\to\infty} \frac{F^{(m)}_{n+1}}{F^{(m)}_n} = \varphi_m, \]
where $\varphi_m$ is the unique positive root of $x^2 -mx -1 =0$. Thus
\[ \varphi_m = \frac{m + \sqrt{m^2 +4}}{2}. \]
Note that $\varphi_m$ lies in the interval $(m,m+1)$.
These numbers are collectively referred to as the (quadratic) {\em metallic means}. For instance, $\varphi_2 = 1 + \sqrt{2} \approx 2.414$ is known as the Silver Mean, and $\varphi_3 = \frac{3 + \sqrt{13}}{2} \approx 3.303$ is called the Bronze Mean.  

The above family of sequences may also be generated via the {\sc invert} transform\footnote{Introduced by P.~J.~Cameron in \cite[Section~3]{Cameron} as operator $A$.}. In fact, for every $m\in\mathbb{N}$, we have
\begin{equation*}
 1 + F_{m+1}(x) = \frac{1}{1-F_m(x)}.
\end{equation*}
In other words, the sequence $\big(F^{(m+1)}_n\big)$ is the {\sc invert} transform of the sequence $\big(F^{(m)}_n\big)$, hence it is the $m$th {\sc invert} transform of the Fibonacci sequence.

In this paper, we adopt this point of view to introduce a family of generalized metallic means obtained through {\sc invert} transforms of the generalized $k$th-order Fibonacci sequence. In Section~\ref{sec:cubic}, we illustrate our approach for the cubic case ($k=3$) that involves the tribonacci sequence. In Section~\ref{sec:general}, we then generalize our results to arbitrary values of $k$. In all cases, each generalized metallic mean is shown to be the unique positive root of a polynomial determined by the sequence. In the last section, we offer some remarks regarding combinatorial interpretations and a possible direction for future research.

\section{Cubic metallic means}
\label{sec:cubic}

We start by considering the sequence $(T_n)_{n\in\mathbb{N}_0}$ defined by the recurrence
\begin{gather*}
T_0 = 0,\; T_1=1,\; T_2=1, \\
T_n = T_{n-1} + T_{n-2} + T_{n-3} \;\text{ for } n\ge 3.
\end{gather*}
The corresponding generating function $T(x)$ takes the form
\[ T(x) = \frac{x}{1-x-x^2-x^3}, \]
and its $(m-1)$-th {\sc invert} transform is the sequence $\big(T^{(m)}_{n}\big)_{n\in\mathbb{N}_0}$ with generating function
\[ T_m(x) = \frac{x}{1-mx-x^2-x^3}. \]
In particular, $\big(T^{(m)}_n\big)_{n\in\mathbb{N}_0}$ satisfies the recurrence relation
\begin{gather*}
T^{(m)}_{0} = 0,\; T^{(m)}_{1}=1,\; T^{(m)}_{2}=m, \\
T^{(m)}_{n} = mT^{(m)}_{n-1} + T^{(m)}_{n-2} + T^{(m)}_{n-3} \;\text{ for } n\ge 3.
\end{gather*}

Observe that the corresponding characteristic polynomial $p(x)=x^3 - mx^2 -x -1$ satisfies $p(m)<0<p(m+1)$, so $p(x)$ has a real root $\tau_m\in(m,m+1)$. Moreover,
\[ p(x) = (x-\tau_m)\left(x^2+\tfrac{\tau_m+1}{\tau_m^2}\,x+\tfrac{1}{\tau_m}\right), \]
hence the other two roots of $p(x)$ are the complex numbers 
\[ \gamma_m^{\pm} = -\frac{1}{2\tau_m^2}\left(\tau_m+1 \pm i\sqrt{4\tau_m^3-(\tau_m+1)^2}\right). \]
Since $|\gamma_m^{\pm}|=1/\sqrt{\tau_m}<\tau_m$, and since 
\[  T^{(m)}_{n} = c_1 \tau_m^n + c_2 (\gamma_m^-)^n + c_3 (\gamma_m^+)^n 
    \;\text{ for some constants } c_1,c_2,c_3, \]
one can easily deduce that 
\[ \lim_{n\to\infty} \frac{T^{(m)}_{n+1}}{T^{(m)}_n} = \tau_m. \]
This motivates the following definition.

\begin{definition}
Let $m\in\mathbb{N}$. The unique real root $\tau_m$ of the polynomial $x^3 - mx^2 -x -1$ will be referred to as the $m$th {\em cubic metallic mean}. We have $m<\tau_m<m+1$.
\end{definition}
For example, for $m=1,2,3$, we get
\begin{align*} 
\tau_1 &= \frac{1}{3}\left(1 + \sqrt[3]{19+3\sqrt{33}} + \sqrt[3]{19-3\sqrt{33}}\right)
  \approx 1.839, \\
\tau_2 &= \frac{1}{3}\left(2 + \sqrt[3]{\tfrac{61+9\sqrt{29}}{2}} + \sqrt[3]{\tfrac{61-9\sqrt{29}}{2}}\right)
  \approx 2.547, \\
\tau_3 &= \frac{1}{3}\left(3 + \sqrt[3]{54 + 6\sqrt{33}} + \sqrt[3]{54 - 6\sqrt{33}} \right) 
  \approx 3.383.
\end{align*}

\medskip\noindent
We call $\tau_1$,  $\tau_2$, and $\tau_3$ the {\em cubic} golden, silver, and bronze means, respectively. The number $\tau_1$ is also known as the {\em tribonacci constant}, see e.g.\ \cite{Sharp}. For more information, including geometric interpretations of $\tau_1$, we refer to \cite[\oeis{A058265}]{Sloane} and the links therein. For example, one interpretation of $\tau_1$ is the following. If a line segment is divided into three parts of lengths $a$, $b$, and $c$ such that $\frac{a+b+c}{a} = \frac{a}{b} = \frac{b}{c}$, then the common ratio is precisely $\tau_1$ (see Fig.~\ref{fig:golden}).

\bigskip
\begin{figure}[ht]
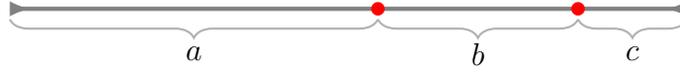

\tikz[scale=0.9]{%
\draw[ultra thick,gray,latex reversed-latex reversed] (0,0)--(10,0); 
\fill[red] (5.437,0) circle (0.1); \fill[red] (8.393,0) circle (0.1);
\draw [decorate,decoration={brace,amplitude=7pt,mirror,raise=4pt},thick,gray!60]
 (0,0) -- (5.437,0) node [midway,yshift=-6mm,black] {$a$};
\draw [decorate,decoration={brace,amplitude=7pt,mirror,raise=4pt},thick,gray!60]
 (5.437,0) -- (8.393,0) node [midway,yshift=-6mm,black] {$b$};
\draw [decorate,decoration={brace,amplitude=7pt,mirror,raise=4pt},thick,gray!60]
 (8.393,0) -- (10,0) node [midway,yshift=-6mm,black] {$c$};
}
\caption{$(a+b+c):a$ is equal to $a:b$ and equal to $b:c$.}
\label{fig:golden}
\end{figure}

It is worth noting that $\tau_1$ also appears as the order of convergence of a certain iterative algorithm for solving nonlinear least squares problems, cf.~\cite{ShGna05}.

\section{Generalized metallic means} 
\label{sec:general}

Motivated by the quadratic and cubic metallic means defined via the {\sc invert} transform of the Fibonacci and tribonacci sequences, respectively, we now consider the generalized $k$th-order Fibonacci sequence ($k\ge 2$) with generating function
\[ G(x) = \frac{x}{1-x-x^2- \cdots - x^k}. \]
If we write $G(x)=\sum_{n=1}^\infty g_n x^n$, then $(g_n)_{n\in\mathbb{N}_0}$ satisfies the recurrence relation
\begin{equation*}
g_n = g_{n-1} + g_{n-2} + \dots + g_{n-k} \;\text{ for } n\ge k,
\end{equation*}
with initial values $g_0=0$, $g_1=1$, and if $k\ge 3$, 
\[ g_j = 2^{j-2} \;\text{ for } j\in\{2,\dots,k-1\}. \]
We apply the {\sc invert} transform $m-1$ times and arrive at the sequence $\big(g^{(m)}_n\big)_{n\in\mathbb{N}_0}$ with generating function
\begin{equation} \label{eq:G_m}
 G_m(x) = \frac{x}{1-mx-x^2- \cdots - x^k},
\end{equation}
thus we have the recurrence relation
\[ g^{(m)}_{n} = mg^{(m)}_{n-1} + g^{(m)}_{n-2} + \dots +g^{(m)}_{n-k} \;\text{ for } n\ge k. \]

As a consequence, if $\gamma_1,\dots,\gamma_k$ are the distinct\footnote{This fact will be discussed in the proof of Theorem~\ref{thm:metallicMean}.} roots of the polynomial
\begin{equation} \label{eq:charpoly}
 p_m(x) = x^k-mx^{k-1}-x^{k-2} - \dots - x-1,
\end{equation}
then we have
\begin{equation*}
  g^{(m)}_{n} = c_1\gamma_1^n + \dots + c_k\gamma_k^n,
\end{equation*}
for some constants $c_1,\dots,c_k$.

\begin{theorem}
For every $m\in\mathbb{N}$, the polynomial $p_m(x)$ has a unique positive root $\varrho_m$ with $m<\varrho_m<m+1$. Moreover, every other root $\gamma_j$ of $p_m(x)$ satisfies $|\gamma_j|<\varrho_m$.
\end{theorem}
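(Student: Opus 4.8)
The plan is to turn the polynomial equation into a monotonicity statement and then apply the triangle inequality. First I would locate the positive root: direct evaluation gives $p_m(m)=-(1+m+\cdots+m^{k-2})<0$, and a short computation gives $p_m(m+1)=\big((m-1)(m+1)^{k-1}+1\big)/m>0$, so the intermediate value theorem yields a root $\varrho_m\in(m,m+1)$. For uniqueness among positive reals, divide $p_m(x)$ by $x^k$: for $x>0$, the equation $p_m(x)=0$ is equivalent to
\[ f(x):=\frac{m}{x}+\frac{1}{x^{2}}+\cdots+\frac{1}{x^{k}}=1. \]
Every summand of $f$ is positive and strictly decreasing on $(0,\infty)$, so $f$ is a strictly decreasing bijection of $(0,\infty)$ onto itself; hence $f(x)=1$ has exactly one positive solution, namely $\varrho_m$. (Descartes' rule of signs gives this too, since the coefficient sequence of $p_m$ changes sign exactly once.)

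For the dominance claim, let $\gamma$ be any root of $p_m$. Since $p_m(0)=-1$ we have $\gamma\neq 0$, and dividing $p_m(\gamma)=0$ by $\gamma^k$ gives $\frac{m}{\gamma}+\sum_{j=2}^{k}\gamma^{-j}=1$. The triangle inequality then yields
\[ 1=\left|\frac{m}{\gamma}+\sum_{j=2}^{k}\gamma^{-j}\right|\le\frac{m}{|\gamma|}+\sum_{j=2}^{k}|\gamma|^{-j}=f(|\gamma|), \]
and since $f$ is strictly decreasing with $f(\varrho_m)=1$, it follows that $|\gamma|\le\varrho_m$.

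The point that requires real care --- and which I expect to be the crux --- is upgrading this to a strict inequality whenever $\gamma\neq\varrho_m$. Suppose $|\gamma|=\varrho_m$; then the triangle inequality above is an equality. Each summand $m/\gamma,\ \gamma^{-2},\dots,\gamma^{-k}$ is nonzero (every coefficient is positive and $\gamma\neq0$), so equality in the triangle inequality forces all of them to have the same argument. In particular $m/\gamma$ and $\gamma^{-2}$ have the same argument, so their quotient $m\gamma$ is a positive real number; as $m>0$, this means $\gamma$ is a positive real, hence $\gamma=\varrho_m$ by the uniqueness already proved. Therefore every root $\gamma\neq\varrho_m$ satisfies $|\gamma|<\varrho_m$, which is exactly the statement.

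It remains to comment on the distinctness of the $k$ roots referred to just before the theorem. That $\varrho_m$ itself is simple follows from the identity $x p_m'(x)-k\,p_m(x)=mx^{k-1}+2x^{k-2}+\cdots+(k-1)x+k$, which is strictly positive for $x>0$: evaluating at $x=\varrho_m$ gives $\varrho_m\,p_m'(\varrho_m)>0$, so $p_m'(\varrho_m)\neq0$. Squarefreeness of $p_m$ in general is a routine check that $\gcd(p_m,p_m')=1$ --- for instance via $(x-1)p_m(x)=x^{k+1}-(m+1)x^{k}+(m-1)x^{k-1}+1$, whose derivative has an explicit quadratic cofactor easily seen to share no root with it --- and in any event it is not needed for the limit $g^{(m)}_{n+1}/g^{(m)}_n\to\varrho_m$, which uses only that $\varrho_m$ is a simple, strictly dominant root. (Alternatively, the whole theorem follows at once from the Perron--Frobenius theorem applied to the companion matrix of $p_m$: its associated digraph is primitive, since the top-left entry is $m\ge1$ and the subdiagonal entries are all $1$, so the spectral radius is a simple positive eigenvalue strictly exceeding all others in modulus; the localization $m<\varrho_m<m+1$ still comes from the sign evaluation above.)
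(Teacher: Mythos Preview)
Your proof is correct and takes a genuinely different---and cleaner---route than the paper's.

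The paper works with the auxiliary polynomial $(x-1)p_m(x)=x^{k+1}-(m+1)x^k+(m-1)x^{k-1}+1$ throughout: it evaluates $p_m(m)$ and $p_m(m+1)$ via this identity, invokes Descartes' rule for uniqueness, and for the strict-dominance step it proves a separate refined triangle inequality (a Maligranda-type lemma) for three complex numbers, applies it to $\gamma^{k+1}$, $(m-1)\gamma^{k-1}$, $1$, and reduces the equality case to $|e^{i\theta(k+1)}+e^{i\theta(k-1)}+1|=3$, which is then analyzed by cases on $\theta$. Your approach instead normalizes by $x^k$ to get $f(x)=m/x+\sum_{j=2}^k x^{-j}$, whose strict monotonicity gives both uniqueness and the non-strict bound $|\gamma|\le\varrho_m$ in one stroke; the equality case then falls out immediately from the standard equality condition in the triangle inequality (all summands nonzero, sum equal to the positive real number $1$, hence every summand positive real, hence $\gamma>0$). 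This sidesteps the auxiliary lemma entirely and is the standard ``Pisot-type'' argument. Your remarks on simplicity are also correct: the identity $x\,p_m'(x)-k\,p_m(x)=mx^{k-1}+2x^{k-2}+\cdots+k$ is a nice way to see $p_m'(\varrho_m)\neq0$ directly, and you are right that simplicity of $\varrho_m$ plus strict dominance already suffices for the ratio limit in the next theorem (the paper proves full squarefreeness via the quadratic factor of $q'$, which is more than is needed). The Perron--Frobenius remark is a legitimate alternative as well.
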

\begin{proof}
Since $(x-1) p_m(x) = x^{k+1} - (m+1)x^k + (m-1)x^{k-1} +1$, we have
\begin{equation}\label{eq:p_m}
 p_m(x) = \frac{x^{k+1} - (m+1)x^k + (m-1)x^{k-1} +1}{x-1} \; \text{ for } x\not=1.
\end{equation}
Hence
\begin{equation*}
 p_m(m) =	\begin{cases} 
 		1-k &\text{if } m=1, \\
		\frac{1-m^{k-1}}{m-1} & \text{if } m>1,
		\end{cases}
 \quad\text{and}\quad p_m(m+1) = \frac{(m-1)(m+1)^{k-1} +1}{m}.
\end{equation*}
Clearly, $p_m(m)<0<p_{m}(m+1)$ for every $m$, which implies that $p_m(x)$ must have a real root between $m$ and $m+1$. We call this root $\varrho_m$. By Descartes' rule of signs, $\varrho_m$ is the only positive real root of $p_m(x)$. 

As a consequence, $p_m(x)<0$ for every $x\in (0,\varrho_m)$ and $p_m(x)>0$ for every $x>\varrho_m$.
Let $\gamma\not=\varrho_m$ be such that $p_m(\gamma)=0$. Then $\gamma^{k} = m\gamma^{k-1} + \gamma^{k-2} +\cdots+\gamma+1$ and so
\[ |\gamma|^{k} \le m|\gamma|^{k-1} + |\gamma|^{k-2} +\cdots+|\gamma|+1. \]
This implies $p_m(|\gamma|)\le 0$, hence $|\gamma|\le \varrho_m$. We will show that this inequality is strict.

Note that, by \eqref{eq:p_m}, $p(x)=0$ if and only if $(m+1)x^k = x^{k+1} + (m-1)x^{k-1} +1$.

Assume that $\gamma\not=\varrho_m$ is a root with $|\gamma|=\varrho_m$. Thus $p_m(\gamma)=0=p_m(|\gamma|)$, hence
\begin{align} \label{eq:gamma}
 (m+1)\gamma^k &= \gamma^{k+1} + (m-1)\gamma^{k-1} +1, \\ \label{eq:|gamma|}
 (m+1)|\gamma|^k &= |\gamma|^{k+1} + (m-1)|\gamma|^{k-1} +1.
\end{align}
Equation~\eqref{eq:gamma} together with Lemma~\ref{lem:triangle_Ineq} gives
\[ |(m+1)\gamma^k| + \underbrace{\left(3-\left|\tfrac{\gamma^{k+1}}{|\gamma^{k+1}|}+\tfrac{\gamma^{k-1}}{|\gamma^{k-1}|}+1\right|\right)}_{\ge 0} \le |\gamma^{k+1}|+|(m-1)\gamma^{k-1}|+1, \]
which by \eqref{eq:|gamma|} implies $\left|\tfrac{\gamma^{k+1}}{|\gamma^{k+1}|}+\tfrac{\gamma^{k-1}}{|\gamma^{k-1}|}+1\right| = 3$. If $\gamma=\varrho_m e^{i\theta}$, this can be written as
\[ \left|e^{i\theta(k+1)}+e^{i\theta(k-1)}+1\right| = 3, \]
which is only possible if $\theta=0$ or if $\theta=\pi$ (for odd $k$). However, $\theta=0$ would contradict the fact that $\gamma\not=\varrho_m$, and if $\theta=\pi$ and $k$ is odd, the left-hand side of \eqref{eq:gamma} would be negative while the right-hand side would be positive, a contradiction.

In conclusion, if $\gamma\not=\varrho_m$ and $p_m(\gamma)=0$, then $|\gamma| < \varrho_m$.
\end{proof}

The following lemma was motivated by a theorem of Maligranda \cite{Maligranda}. 
\begin{lemma}\label{lem:triangle_Ineq}
Let $z_1,z_2,z_3\in\mathbb{C}$ be such that $0<|z_1|\le |z_2|\le |z_3|$. Then
\[ |z_1+z_2+z_3| + \left(3-\left|\frac{z_1}{|z_1|}+\frac{z_2}{|z_2|}+\frac{z_3}{|z_3|}\right|\right)|z_1| 
  \le |z_1|+|z_2|+|z_3|. \]
\end{lemma}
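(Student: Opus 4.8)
\textbf{Proof proposal for Lemma~\ref{lem:triangle_Ineq}.}

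The plan is to reduce the three-term statement to a two-term version and then prove that two-term version by a direct computation. First I would isolate the quantity $z_1/|z_1|$ and compare the left side against what the ordinary triangle inequality gives. Writing $u_j = z_j/|z_j|$ for the unit vectors and $r_j = |z_j|$, the claim is
\[
  |z_1+z_2+z_3| \le r_1+r_2+r_3 - (3-|u_1+u_2+u_3|)\,r_1.
\]
Since $3 - |u_1+u_2+u_3| = (|u_1|+|u_2|+|u_3|) - |u_1+u_2+u_3| \ge 0$, the correction term is genuinely subtracted, so this is a sharpening of the triangle inequality that exploits the smallness of $r_1$. The natural move is to write $z_j = r_1 u_j + (r_j - r_1) u_j$ for $j=2,3$ (legal because $r_1 \le r_2 \le r_3$), so that
\[
  z_1+z_2+z_3 = r_1(u_1+u_2+u_3) + (r_2-r_1)u_2 + (r_3-r_1)u_3.
\]
Applying the ordinary triangle inequality to this decomposition yields
\[
  |z_1+z_2+z_3| \le r_1|u_1+u_2+u_3| + (r_2-r_1) + (r_3-r_1),
\]
and the right-hand side is exactly $r_1+r_2+r_3 - (3-|u_1+u_2+u_3|)r_1$. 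That completes the argument.

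So in fact the lemma follows in one line once the decomposition is written down, and there is no serious obstacle — the only thing to be careful about is that the coefficients $r_2-r_1$ and $r_3-r_1$ are nonnegative, which is precisely the hypothesis $0<|z_1|\le|z_2|\le|z_3|$; this is what lets us pull out a common radius $r_1$ from all three terms without sign issues. I would also remark that the hypothesis $|z_1|>0$ is needed only so that $u_1=z_1/|z_1|$ is defined; the inequality is an equality when $u_1=u_2=u_3$ (all $z_j$ positive multiples of a common direction), which shows it is sharp. One could alternatively phrase the proof via Maligranda's refinement of the triangle inequality for two vectors applied inductively, but the direct decomposition above is cleaner and self-contained, so that is the route I would take in the write-up.
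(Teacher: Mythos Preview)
Your proof is correct and is essentially the same as the paper's: the decomposition $z_1+z_2+z_3 = r_1(u_1+u_2+u_3) + (r_2-r_1)u_2 + (r_3-r_1)u_3$ is exactly the identity the paper writes (in the form $\frac{|z_1|}{|z_j|}z_j + (1-\frac{|z_1|}{|z_j|})z_j$), followed by the same application of the triangle inequality. Your notation with $u_j$ and $r_j$ is arguably cleaner, but the argument is identical.
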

\begin{proof}
By the triangle inequality, and since $|z_1|\le |z_2|\le |z_3|$, we have
\begin{align*}
 |z_1+z_2+z_3| &= \left|\frac{|z_1|}{|z_1|}z_1+\frac{|z_1|}{|z_2|}z_2+\frac{|z_1|}{|z_3|}z_3 
 +\left(1-\frac{|z_1|}{|z_2|}\right)z_2 + \left(1-\frac{|z_1|}{|z_3|}\right)z_3\right| \\
 &\le \left|\frac{z_1}{|z_1|}+\frac{z_2}{|z_2|}+\frac{z_3}{|z_3|}\right||z_1| 
 + \left(1-\frac{|z_1|}{|z_2|}\right)|z_2| + \left(1-\frac{|z_1|}{|z_3|}\right)|z_3| \\
 &= \left|\frac{z_1}{|z_1|}+\frac{z_2}{|z_2|}+\frac{z_3}{|z_3|}\right||z_1| 
 + |z_2| - |z_1| + |z_3| - |z_1|.
\end{align*} 
Adding and subtracting $|z_1|$, we arrive at
\[ |z_1+z_2+z_3| \le 
  |z_1| + |z_2| + |z_3| + \left|\frac{z_1}{|z_1|}+\frac{z_2}{|z_2|}+\frac{z_3}{|z_3|}\right||z_1| - 3|z_1|, \]
which is equivalent to the claimed inequality.
\end{proof}

\begin{theorem} \label{thm:metallicMean}
Fix $k, m\in\mathbb{N}$, $k\ge 2$, and let $\big(g^{(m)}_n\big)_{n\in\mathbb{N}}$ be the sequence defined by the generating function $G_m(x)$ from \eqref{eq:G_m}. Then
\[ \lim_{n\to\infty} \frac{g^{(m)}_{n+1}}{g^{(m)}_n} = \varrho_m, \]
where $\varrho_{m}$ is the unique positive root of the polynomial \eqref{eq:charpoly}. Consistent with the quadratic and cubic cases, we call $\varrho_{m}$ the $m${\em th} metallic mean of degree $k$.
\end{theorem}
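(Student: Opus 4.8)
The plan is to read the limit off the closed form provided by the linear recurrence $g^{(m)}_n = m g^{(m)}_{n-1} + g^{(m)}_{n-2} + \cdots + g^{(m)}_{n-k}$, whose characteristic polynomial is $p_m(x)$ in \eqref{eq:charpoly}. From the theorem established above we already know that $p_m$ has a unique positive root $\varrho_m\in(m,m+1)$ with $|\gamma|<\varrho_m$ for every other root $\gamma$; moreover, since the coefficient string $(1,-m,-1,\dots,-1)$ of $p_m$ exhibits exactly one sign change, Descartes' rule of signs forces $\varrho_m$ to be a \emph{simple} root. Thus $\varrho_m$ is the strictly dominant root of $p_m$, and the whole statement reduces to showing that the coefficient of $\varrho_m^n$ in the expansion of $g^{(m)}_n$ is nonzero: granting this, one divides numerator and denominator of $g^{(m)}_{n+1}/g^{(m)}_n$ by $\varrho_m^n$ and lets $n\to\infty$.

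First I would dispatch the fact flagged in the footnote, that $\gamma_1,\dots,\gamma_k$ are distinct, so that $g^{(m)}_n = c_1\gamma_1^n + \cdots + c_k\gamma_k^n$ for constants $c_j$. Since $p_m(1) = 2-m-k\neq 0$ for $k\ge 2$, any repeated root $\gamma$ of $p_m$ would be a repeated root of $(x-1)p_m(x) = x^{k+1}-(m+1)x^k+(m-1)x^{k-1}+1$, hence a root of its derivative; after dividing by $\gamma^{k-2}$ this confines $\gamma$ to the two roots of $(k+1)x^2-k(m+1)x+(k-1)(m-1)$, and a short elimination against $p_m(\gamma)=0$ yields a contradiction. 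It is worth noting that full distinctness is not actually needed: it suffices that $\varrho_m$ be simple and dominant, for then $g^{(m)}_n = c_1\varrho_m^n + \sum_j P_j(n)\gamma_j^n$ with $\deg P_j$ bounded and every $|\gamma_j|<\varrho_m$, and the argument below is unchanged.

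Next I would check $c_1\neq 0$ via the generating function. With $q(x) = 1-mx-x^2-\cdots-x^k$ one has $q(x) = x^k p_m(1/x) = \prod_{j=1}^k(1-\gamma_j x)$, so $G_m(x) = x/q(x)$ has partial-fraction expansion $\sum_{j}\frac{A_j}{1-\gamma_j x}$, giving $g^{(m)}_n = \sum_j A_j\gamma_j^n$; taking $\gamma_1=\varrho_m$,
\[ c_1 = A_1 = \bigl[(1-\varrho_m x)G_m(x)\bigr]_{x=1/\varrho_m} = \frac{1/\varrho_m}{\prod_{j\ne 1}(1-\gamma_j/\varrho_m)}\ne 0, \]
since the numerator is nonzero and each factor $1-\gamma_j/\varrho_m$ is nonzero ($\gamma_j\ne\varrho_m$ because $\varrho_m$ is simple). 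Put differently, $x=1/\varrho_m$ is a simple pole of $G_m$ of strictly smallest modulus among all its poles $1/\gamma_j$, so its residue is nonzero and governs the coefficient asymptotics; this also covers the non-distinct case. Finally, writing $g^{(m)}_n = c_1\varrho_m^n + o(\varrho_m^n)$ with $c_1\ne 0$, we obtain
\[ \frac{g^{(m)}_{n+1}}{g^{(m)}_n} = \frac{c_1\varrho_m^{n+1}+o(\varrho_m^n)}{c_1\varrho_m^n+o(\varrho_m^n)} \longrightarrow \varrho_m. \]
The genuinely substantive point — that $\varrho_m$ strictly beats every other root in absolute value — is precisely the theorem already proved above, so what remains here is comparatively routine; I expect the repeated-root elimination to be the most computational step, and, as noted, it can be bypassed entirely.
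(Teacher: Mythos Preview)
Your argument follows the same overall strategy as the paper's: express $g^{(m)}_n$ in terms of the characteristic roots, invoke the strict dominance of $\varrho_m$ from the preceding theorem, and divide through by $\varrho_m^n$. The differences lie in the supporting details, and in both places your route is the tighter one. For the simplicity of $\varrho_m$, the paper proves the stronger statement that \emph{all} roots of $p_m$ are simple by analysing the derivative of $(x-1)p_m(x)$ (the same auxiliary polynomial you mention), whereas your appeal to Descartes' rule---which counts positive roots with multiplicity---settles the simplicity of $\varrho_m$ alone in one line. More importantly, the paper never checks that the coefficient $c_k$ of $\varrho_m^n$ is nonzero before passing to the limit; your residue computation via the partial-fraction expansion of $G_m(x)=x/q(x)$ fills that gap and, as you observe, works regardless of whether the remaining roots are simple. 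So your version is essentially a refinement of the paper's argument: same skeleton, with the $c_1\ne 0$ verification added and full distinctness correctly downgraded to optional.
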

\begin{proof}
We already showed that $p_m(x)$ has a unique positive real root $\varrho_m\in(m,m+1)$, and we claim that every root of $p_m(x)$ is simple. To see this, consider the polynomial
\[ q(x) = (x-1)p_m(x) = x^{k+1} - (m+1)x^k + (m-1)x^{k-1} +1. \] 
Clearly, $p_m(x)$ and $q(x)$ share all of their roots (including multiplicity) except for $x=1$. Now, since 
\[ q'(x) = x^{k-2}\left((k+1)x^2 - k(m+1)x + (k-1)(m-1)\right), \]
and since the quadratic polynomial $(k+1)x^2 - k(m+1)x + (k-1)(m-1)$ has two distinct positive real roots (the discriminant is $k^2(m-1)^2+4(k^2+m-1)$), we conclude that the roots of $q(x)$, and therefore the roots of $p_m(x)$, are all simple. 

Let $\gamma_1,\dots,\gamma_{k-1}, \varrho_m$ be the $k$ distinct roots of the polynomial $p_m(x)$ associated with the sequence $\big(g^{(m)}_n\big)_{n\in\mathbb{N}}$. Then, there are constants $c_1,\dots,c_k$ such that 
\begin{equation*}
  g^{(m)}_{n} = c_1\gamma_1^n + \dots + c_{k-1}\gamma_{k-1}^n + c_k \varrho_m^n.
\end{equation*}
Therefore,
\begin{align*}
  \lim_{n\to\infty} \frac{g^{(m)}_{n+1}}{g^{(m)}_{n}} 
 &=  \lim_{n\to\infty} \frac{c_1\gamma_1^{n+1} + \dots + c_{k-1}\gamma_{k-1}^{n+1} + c_k \varrho_m^{n+1}}{c_1\gamma_1^n + \dots + c_{k-1}\gamma_{k-1}^n + c_k \varrho_m^n} \\
 &=  \lim_{n\to\infty} \frac{c_1\gamma_1(\frac{\gamma_1}{\varrho_m})^{n} + \dots + c_{k-1}\gamma_{k-1}(\frac{\gamma_{k-1}}{\varrho_m})^{n} + c_k \varrho_m}{c_1(\frac{\gamma_1}{\varrho_m})^{n} + \dots + c_{k-1}(\frac{\gamma_{k-1}}{\varrho_m})^{n} + c_k} \\
 &= \varrho_{m}.
\end{align*}
The last step follows from the fact that, for $j\in\{1,\dots,k-1\}$, we have $|\gamma _j| < \varrho_{m}$ which implies $\lim\limits_{n\to\infty} \big(\frac{\gamma _j}{\varrho_{m}}\big)^n = 0$.
\end{proof}

\section{Concluding remarks} 

Motivated by the (quadratic) metallic means, which may be defined through sequences that are related to the Fibonacci sequence via the {\sc invert} transform, in this paper we have introduced a family of generalized  metallic means of arbitrary degree $k>2$. 

Observe that our definition is consistent with the quadratic case ($k=2$), and the generalized metallic means of degree $k$ as well as the corresponding sequences $(g^{(m)}_n)_{n\in\mathbb{N}_0}$, all satisfy similar properties as their quadratic counterparts. 

Combinatorially, it is known and easy to prove that, for $n\ge 2$, the sequence $F_n$ gives the number of tilings of an $(n-1)\times 1$ rectangular board by $1\times 1$ and $2\times 1$ tiles. In that context, $F^{(m)}_n$ gives the number of such tilings, where the $1\times 1$ tiles come in $m$ colors. In general, for $k,n\ge 2$, $g^{(m)}_n$ gives the number of tilings of an $(n-1)\times 1$ rectangular board by tiles of sizes $1\times 1$, $2\times 1$, \dots, $k\times 1$, where the $1\times 1$ tiles come in $m$ colors.

For example, if $k=3$ and $m=2$, we get the sequence $(T^{(2)}_n)_{n\in\mathbb{N}}$ with terms
\[ 1, 2, 5, 13, 33, 84, 214, 545, 1388, 3535, 9003, 22929, 58396, \dots \]
The 13 such tilings of a $3\times 1$ rectangular board are:

\bigskip
\begin{center}
\tikzstyle{cTile}=[pattern=north east lines, pattern color=tileColor]
\begin{tikzpicture}[scale=0.65]
\foreach \x in {0,1}{
\begin{scope}[xshift=100*\x]
\ifthenelse{\x=1}{\draw[cTile] (2,0) rectangle (3,1);}{}
\draw[ultra thick] (0,0) rectangle (2,1); 
\draw[ultra thick] (2,0) rectangle (3,1); 
\end{scope}
}
\foreach \x in {2,3}{
\begin{scope}[xshift=100*\x]
\ifthenelse{\x=3}{\draw[cTile] (0,0) rectangle (1,1);}{}
\draw[ultra thick] (0,0) rectangle (1,1); 
\draw[ultra thick] (1,0) rectangle (3,1); 
\end{scope}
}
\begin{scope}[xshift=400]
\draw[ultra thick] (0,0) rectangle (3,1); 
\end{scope}
\foreach \x in {0,1,2,3}{
\begin{scope}[xshift=100*\x, yshift=40]
\draw[cTile] (0,0) rectangle (3,1);
\ifthenelse{\x=3}{}{\draw[fill=white] (\x,0) rectangle (\x+1,1);}
\draw[ultra thick] (0,0) grid (3,1); 
\end{scope}
}
\foreach \x in {0,1,2,3}{
\begin{scope}[xshift=100*\x, yshift=80]
\ifthenelse{\x=3}{}{\draw[cTile] (\x,0) rectangle (\x+1,1);}
\draw[ultra thick] (0,0) grid (3,1); 
\end{scope}
}
\end{tikzpicture}
\end{center}
\bigskip

Other combinatorial interpretations are certainly possible.

\medskip
We finish our exposition by mentioning that, for the generalized golden mean ($m=1$) of arbitrary degree, Hare, Prodinger, and Shallit \cite{HPS} gave series representations for $\varrho_1$, $1/\varrho_1$, and $1/(2-\varrho_1)$. It would be interesting to find corresponding series representations for the $m$th generalized metallic mean $\varrho_m$.



\begin{thebibliography}{99}
\bibitem{Cameron} P. J.~Cameron, Some sequences of integers, {\em Discrete Math.} \textbf{75} (1989), 89--102. 
%
\bibitem{HPS} K.~Hare, H. Prodinger, and J. Shallit, \emph{Three series for the generalized golden mean}, Fibonacci Quart. \textbf{52} (2014), no. 4, 307--313.
%
\bibitem{Maligranda} L.~Maligranda, Simple norm inequalities, {\em Amer. Math. Monthly} \textbf{113} (2006), 256--260.
%
\bibitem{ShGna05} S.~M.~Shakhno and O.~P.~Gnatyshyn, On an iterative algorithm of order 1.839\dots for solving the nonlinear least squares problems, {\em Appl. Math. Comput.} \textbf{161} (2005), no. 1, 253--264. 
%
\bibitem{Sharp}  J.~Sharp, Have you seen this number? {\em Math. Gaz.} \textbf{82} (1998), no. 494, 203--214.
%
\bibitem{Sloane} N. J. A.~Sloane, The On-Line Encyclopedia of Integer Sequences, \url{http://oeis.org}.
\end{thebibliography}
\end{document}